\definecolor{bluecite}{HTML}{0875b7}
\newcommand{\defeq}{\stackrel{\textup{def}}{=}}
\newcommand{\diff}{\,\mathrm{d}}
\newtheorem{theorem}{Theorem}[section]
\newtheorem*{theorem*}{Theorem}
\theoremstyle{definition}
\newtheorem{remark}[theorem]{Remark}
\title[Higher-order Sobolev and inequalities]{Higher-order Sobolev and Rellich inequalities\\ via iterated Talenti's principle}
\author{Csaba Farkas}\address{\textsc{Csaba Farkas}: Department of Mathematics and Computer Sciencie, Sapientia Hungarian University of Transylvania, Tg. Mure\c s, Romania}	\email{farkascsaba@uni.sapientia.ro \& farkas.csaba2008@gmail.com}
\author{S\'andor Kaj\'ant\'o}\address{ \textsc{S\'andor Kaj\'ant\'o}: Department of Mathematics and Computer Science, Babe\c s-Bolyai University, Cluj-Napoca, Romania}
\email{sandor.kajanto@ubbcluj.ro}
\date{\today}
\thanks{C. Farkas and S. Kajántó are supported by the János Bolyai Research Scholarship of the Hungarian Academy of Sciences.}
\subjclass[2020]{58J60, 26D10, 53C21, 46E35}
\keywords{Iterated Talenti's principle, Sobolev and Rellich inequalities, Riemannian manifolds}
\begin{document}
\begin{abstract}
    In this paper we establish higher-order Sobolev and Rellich-type inequalities on non-compact Riemannian manifolds  supporting an isoperimetric inequality. We highlight two notable settings: manifolds with non-negative Ricci curvature and having Euclidean volume growth (supporting  Brendle's isoperimetric inequality) and manifolds with non-positive sectional curvature (satisfying the Cartan--Hadamard conjecture or supporting Croke's isoperimetric inequality).  Our proofs rely on various symmetrization techniques, the key ingredient is an iterated Talenti's comparison principle. The non-iterated version is analogous to the main result of Chen and Li [J. Geom. Anal., 2023]. 
\end{abstract}
\maketitle

\section{Introduction}\label{sec:intro}
In the last century, \emph{Schwarz symmetrization} became a fundamental tool in proving functional inequalities. The symmetric rearrangement of a set \(\Omega\subset \mathbb{R}^n\), typically denoted by \(\Omega^\star\), is a ball centered at the origin with the property \(\operatorname{vol}(\Omega)=\operatorname{vol}(\Omega^\star)\). In a similar manner, the symmetric rearrangement of a function \(f\colon \Omega\to \mathbb{R}\) is a function \(f^\star\colon \Omega^\star\to [0,\infty)\), whose level sets are the symmetrizations of the level sets of \(|f|\).

When proving functional inequalities, one typically exploits the following two fundamental properties of symmetrizations (see e.g., Lieb and Loss~\cite{lieb2001analysis}): \emph{the Cavalieri principle} (for every \(q>0\), the \(L^q\)-norm is preserved under symmetrization) and the \emph{Pólya--Szegő inequality} (for every \(p>1\), the \(p\)-Dirichlet energy decreases under symmetrization). If in addition, the given functional inequality involves (singular) weights, the toolkit extends by the \emph{Hardy--Littlewood inequality} (the \(L^1\)-norm of the product of two functions increases under symmetrization). For more details we refer to~\S\ref{sec:itp}.

An alternative way to investigate a functional inequality, is to analyze the associated partial differential equation. According to \emph{Talenti's principle}~\cite{talenti1976elliptic}, if \(u\) and \(v\) are  weak solutions of the problems 
\[
    \begin{cases}
        -\Delta u=f,&\text{in }\Omega,\\
        u=0,&\text{on }\partial\Omega,
    \end{cases}
\quad\text{and}\quad
        \begin{cases}
        -\Delta v=f^\star,&\text{in }\Omega^\star,\\
        v=0,&\text{on }\partial\Omega^\star,
    \end{cases}
\]
respectively, then almost everywhere in \(\Omega^\star\) one has \(u^\star\le v\). By an iterative application of this result, in the spirit of Gazzola, Grunau, and Sweers~\cite{gazzola2010optimal}, one can prove higher order functional inequalities, as well.

The common key ingredient in establishing the aforementioned results, is the following sharp \emph{isoperimetric inequality}: If \(\Omega\subset \mathbb{R}^n\) is a \emph{regular domain}, that is a bounded open set with smooth boundary, then 
\[
	    \operatorname{per}(\partial\Omega)\ge n\omega_n^\frac{1}{n}\cdot\operatorname{vol}(\Omega)^\frac{n-1}{n},
\]
where \(\omega_n\) is the volume of the \(n\)-dimensional unit ball, while \(\operatorname{per}(\cdot)\) and \(\operatorname{vol}(\cdot)\) stands for the perimeter and volume, respectively. Moreover,  equality holds if and only if \(\Omega\) is a ball. In other words, the ball has the least perimeter among the sets with  given volume.

The goal of the paper is to show that if an  isoperimetric inequality holds on a Riemannian manifold, then various Euclidean functional inequalities extend naturally to that setting. Throughout the paper, let \((M,g)\) be an \(n\)-dimensional Riemannian manifold, and suppose that there exists a constant \(c_g>0\) such that for every regular domain \(\Omega\subset M\) one has
\begin{equation}\label{eq:intro:isoperimetric}
	 \operatorname{per}(\partial\Omega)\ge c_g\cdot n\omega_n^\frac{1}{n}\cdot\operatorname{vol}(\Omega)^\frac{n-1}{n}.
\end{equation}

Geometric conditions that guarantee the validity of the inequality~\eqref{eq:intro:isoperimetric} with an explicit value of \(c_g\) are the following (we refer to~\S\ref{sec:prelim:isoperimetric} for more details):
\begin{enumerate}
	\item non-negative Ricci curvature and Euclidean volume growth, see Brendle~\cite{brendle2023sobolev};
	\item non-positive sectional curvature, see Croke~\cite{croke1984sharp};
	\item non-parabolicity and bounded elliptic Kato constant, see Impera, Rimoldi, and Veronelli~\cite{impera2025asymptotically}.
\end{enumerate}

In the sequel,  assuming inequality~\eqref{eq:intro:isoperimetric}, we present our main results. Hereafter, for every \(m\ge1\) let
\[
	|D_g^mu|\defeq\begin{cases}
		|\Delta_g^ku|, &\text{if }m=2k,\\
		|\nabla_g(\Delta_g^ku)|, &\text{if }m=2k+1,
	\end{cases}
\]
where \(\Delta^k_g u\) means that the Laplace--Beltrami operator of \((M,g)\) is applied \(k\)-times to \(u\), while \(\nabla_g\) stands for the Riemannian gradient.

Our first result is a higher-order Sobolev inequality, which can be stated as follows.
\begin{theorem}\label{thm:sobolev}
	Let \((M,g)\) be a complete, non-compact \(n\)-dimensional Riemannian manifold supporting~\eqref{eq:intro:isoperimetric}. Let \(m\) be a positive integer and \(p>1\) such that \(n>mp\).  Then for every \(u\in C_0^\infty(M)\) one has 
	\begin{equation}\label{eq:sobolev}
		\int_M|D_g^mu|^p\diff v_g\ge (c_g)^{mp}\cdot S_{m,p}\cdot \left(\int_M |u|^{p^*}\diff v_g\right)^\frac{p}{p^*},
	\end{equation}
	where \(p^*\defeq\frac{np}{n-mp}\), and \(S_{m,p}\) is the Euclidean Sobolev constant of degree \(m\) and order \(p\).
\end{theorem}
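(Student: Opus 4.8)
\medskip
\noindent\textbf{Proof plan.} The idea is to transport~\eqref{eq:sobolev} to its Euclidean model by symmetrization, using the iterated Talenti comparison principle of~\S\ref{sec:itp} as the engine. Fix $u\in C_0^\infty(M)$, set $f\defeq|D_g^mu|$, and write $m=2k$ if $m$ is even and $m=2k+1$ if $m$ is odd, so that $k=\lfloor m/2\rfloor$ counts the Poisson steps that will occur. First I would introduce the chain $\psi_j\defeq(-\Delta_g)^ju\in C_0^\infty(M)$, $j=0,\dots,k$, which satisfies $\psi_0=u$ and $-\Delta_g\psi_{j-1}=\psi_j$ for $1\le j\le k$, and which has $|\psi_k|=f$ when $m$ is even and $|\nabla_g\psi_k|=f$ when $m$ is odd. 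Writing $\varphi^\star$ for the Schwarz rearrangement onto $\mathbb{R}^n$ of $|\varphi|$, I would feed this chain into the iterated Talenti principle: starting from the top source $\psi_k$ and descending the chain, each Poisson step --- via~\eqref{eq:intro:isoperimetric} together with the coarea formula and Cauchy--Schwarz --- costs a factor $c_g^{-2}$ in the comparison constant, so after $k$ steps one arrives at a radially nonincreasing function $v$ on $\mathbb{R}^n$ with $u^\star\le c_g^{-2k}v$ almost everywhere, where $v$ is the $k$-fold whole-space Euclidean Newtonian potential of the rearranged top source: $(-\Delta)^kv=f^\star$ if $m$ is even, and $(-\Delta)^kv=\psi_k^\star$ if $m$ is odd.

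Next I would compare the $m$-th order energies, writing $D^m$ for the Euclidean counterpart of $D_g^m$ (so $|D^m\phi|=|\Delta^k\phi|$ or $|\nabla(\Delta^k\phi)|$ according to the parity of $m$). By the Cavalieri (equimeasurability) principle, in the even case $|\Delta^kv|=f^\star$ yields $\int_{\mathbb{R}^n}|D^mv|^p\diff x=\int_M|D_g^mu|^p\diff v_g$, while in the odd case $|\nabla(\Delta^kv)|=|\nabla\psi_k^\star|$, and the Pólya--Szegő inequality --- a further consequence of~\eqref{eq:intro:isoperimetric}, in the sharp form $\int_M|\nabla_g\psi_k|^p\diff v_g\ge c_g^p\int_{\mathbb{R}^n}|\nabla\psi_k^\star|^p\diff x$ --- yields $\int_{\mathbb{R}^n}|D^mv|^p\diff x\le c_g^{-p}\int_M|D_g^mu|^p\diff v_g$. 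Combining $0\le u^\star\le c_g^{-2k}v$ with the equimeasurability of $u$ and $u^\star$, and then applying to $v$ the sharp Euclidean Sobolev inequality of degree $m$ and order $p$ (valid since $n>mp$, after realizing $v$ in the homogeneous Sobolev space $\dot W^{m,p}(\mathbb{R}^n)$, where $\int_{\mathbb{R}^n}|D^m\phi|^p\diff x\ge S_{m,p}\|\phi\|_{L^{p^*}}^p$), I would obtain
\[
  \Big(\int_M|u|^{p^*}\diff v_g\Big)^{\frac{p}{p^*}}\le c_g^{-2kp}\Big(\int_{\mathbb{R}^n}v^{p^*}\diff x\Big)^{\frac{p}{p^*}}\le c_g^{-2kp}\,S_{m,p}^{-1}\int_{\mathbb{R}^n}|D^mv|^p\diff x .
\]
Substituting the energy identity (even case) or estimate (odd case) of the previous step, and using $2kp=mp$ when $m=2k$ and $2kp+p=mp$ when $m=2k+1$, this rearranges to~\eqref{eq:sobolev}.

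I expect the real difficulty to lie outside this deduction, in the iterated Talenti comparison principle itself --- and, within it, in pinning down the exact factor $c_g^{-2}$ lost per Poisson step --- which is handled in~\S\ref{sec:itp} by the classical coarea/Cauchy--Schwarz argument on super-level sets fed with~\eqref{eq:intro:isoperimetric}. Within the present argument the only delicate points are the bookkeeping of the $c_g$-powers across the even/odd dichotomy, so that they assemble precisely into $c_g^{mp}$, and the functional-analytic detail that the comparison function $v$ must be placed in the homogeneous Sobolev space on which the sharp Euclidean inequality holds --- i.e.\ that passing from $v$ to $|D^mv|$ creates no spurious boundary contribution, which is why it is cleanest to build $v$ as a whole-space Riesz potential rather than as a Dirichlet problem on a ball.
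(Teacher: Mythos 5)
Your argument follows the same route as the paper's: symmetrize, invoke the iterated Talenti comparison to obtain $c_g^{2k}u^\star\le v$ on $\Omega^\star$, compare the $m$-th order energies via Cavalieri (even $m$) or Pólya--Szegő (odd $m$), and close with the sharp Euclidean Sobolev inequality applied to $v$, with the $c_g$-powers assembling to $c_g^{mp}$ exactly as you track them. The only variation is that you realize $v$ as a whole-space Riesz potential of the rearranged source, whereas the paper takes $v$ to be the Navier solution on the ball $\Omega^\star$ (so it implicitly uses, via Gazzola--Grunau--Sweers, that the Navier Sobolev constant on a ball equals the full-space constant $S_{m,p}$); your variant sidesteps that boundary-condition point at the mild cost of an extra maximum-principle comparison between the Navier and whole-space solutions.
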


Note that if \(p=1\) (and \(m=1\)), then the inequality~\eqref{eq:sobolev} becomes the so-called \(L^1\)-Sobolev inequality, which turns out to be equivalent to the isoperimetric inequality~\eqref{eq:intro:isoperimetric}. According to Talenti~\cite{Talenti1976}, ``if \(p = 1\) the
Sobolev inequality behaves in a slightly diﬀerent manner'', and the determination of the best constant becomes problematic, see e.g.~Cassani, Ruf, and Tarsi~\cite{cassani2010best}. In this paper, we focus on the case when \(p>1\).

The investigation of the best constant in the inequality~\eqref{eq:sobolev} in the Euclidean setting (when \(m=1\)) originates from the pioneering works of Aubin~\cite{Aubin1976} and Talenti~\cite{Talenti1976}. 
For general \(m\) and \(p=2\) it is given by
\[
	S_{m,2}=\left(\pi^mn(n-2m)\prod_{s=1}^{m-1}(n^2-4s^2)\right)^{-1}\left(\frac{\Gamma(n)}{\Gamma(\frac{n}{2})}\right)^\frac{2m}{n}.
\]
see  Cotsiolis and Tavoularis~\cite{cotsiolis2004best}.

The sharp version of the inequality~\eqref{eq:sobolev} on Cartan--Hadamard manifolds (i.e., complete, simply connected Riemannian manifolds having non-positive sectional curvature) is also provided by Aubin, assuming the validity of the so-called Cartan--Hadamard conjecture (that is the sharp isoperimetric inequality on the aforementioned setting); see also Hebey~\cite[Chapter 8]{hebey2000nonlinear}. Concerning spaces with non-positive Ricci curvature,   Ledoux~\cite{Ledoux1999} showed that attaining the Euclidean optimal constant in a Sobolev inequality forces \((M,g)\) to be isometric to the Euclidean space. The sharp version of the inequality~\eqref{eq:sobolev} is established by Brendle~\cite{brendle2023sobolev} using ABP method, and Krist\'aly~\cite{KristalyCalcVar24} using optimal mass transport theory.

Concerning the case when \(m=2\) and \(p=2\),  Barbosa and Kristály~\cite{barbosa2018} proved a rigidity result for the corresponding Sobolev inequality in the same spirit as Ledoux. In Theorem~\ref{thm:sobolev:sharpness} we explicitly determine (under a mild geometric assumption, see Remark~\ref{rem:sharp}) the sharp constant for the aforementioned inequality. The issue of sharpness in the general case (\(m>2\) and \(p>1\)) is challenging even in the Euclidean setting.

Our second result is the following higher-order Rellich inequality:
\begin{theorem}\label{thm:rellich}
	Let \((M,g)\) be a complete, non-compact \(n\)-dimensional Riemannian manifold supporting~\eqref{eq:intro:isoperimetric}. Let \(m\) be a positive integer and \(p>1\) such that \(n>mp\). Fix \(x_0\in M\) and let \(\rho(\cdot)=d_g(x_0,\cdot)\) denote the Riemannian distance from \(x_0\). Then for every \(u\in C_0^\infty(M)\) one has 
	\begin{equation}\label{eq:rellich}
		\int_M|D_g^mu|^p\diff v_g\ge (c_g)^{mp}\cdot R_{m,p}\cdot \int_M \frac{|u|^{p}}{\rho^{mp}}\diff v_g,
	\end{equation}
	where \(R_{m,p}\) is the Euclidean Rellich constant of degree \(m\) and order \(p\).
\end{theorem}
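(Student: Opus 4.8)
The plan is to run exactly the symmetrization machine behind Theorem~\ref{thm:sobolev}, but to feed the output into the Euclidean Rellich inequality instead of the Euclidean Sobolev inequality and then transport the singular weight back to $M$. Fix $u\in C_0^\infty(M)$; since only $|u|$ and $|D_g^mu|$ enter~\eqref{eq:rellich}, we may assume $u\ge0$. Fix a bounded open set $\Omega\supset\operatorname{supp}u$ and let $\Omega^\star\subset\mathbb{R}^n$ be the ball centered at the origin with $\operatorname{vol}(\Omega^\star)=\operatorname{vol}_g(\Omega)$, and write $u^\star$ for the Schwarz symmetrization of $u$ onto $\Omega^\star$.

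\textbf{From $M$ to a Euclidean radial model.} First I would apply the iterated Talenti comparison principle of~\S\ref{sec:itp}, precisely as in the proof of Theorem~\ref{thm:sobolev}: writing $m=2k$ or $m=2k+1$ and $w_j:=\Delta_g^ju$, each $w_{j-1}$ solves a Poisson problem with right-hand side $w_j$, and in the odd case the passage from $w_k$ to $|\nabla_gw_k|$ is absorbed by one Pólya--Szegő step. Iterating the comparison principle produces a nonnegative, radially symmetric, nonincreasing function $v$ on $\Omega^\star$ with
\[
	u^\star\le v\quad\text{on }\Omega^\star,\qquad \int_M|D_g^mu|^p\diff v_g\ge (c_g)^{mp}\int_{\Omega^\star}|D^mv|^p\diff x.
\]
Extending $v$ by zero to $\mathbb{R}^n$ and using the definition of $R_{m,p}$ as the Euclidean Rellich constant of degree $m$ and order $p$, together with $v\ge u^\star$, gives
\[
	\int_M|D_g^mu|^p\diff v_g\ge (c_g)^{mp}R_{m,p}\int_{\mathbb{R}^n}\frac{v^p}{|x|^{mp}}\diff x\ge (c_g)^{mp}R_{m,p}\int_{\Omega^\star}\frac{(u^\star)^p}{|x|^{mp}}\diff x.
\]

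\textbf{The main obstacle: transporting the weight.} It remains to bound the last integral below by $\int_M|u|^p\rho^{-mp}\diff v_g$, and this is the step I expect to be the real difficulty. The natural attack is the Hardy--Littlewood inequality: writing $*$ for the one-dimensional nonincreasing rearrangement with respect to $\diff v_g$, one has $\int_M|u|^p\rho^{-mp}\diff v_g\le\int_0^{\operatorname{vol}_g(\Omega)}(u^p)^*(s)\,(\rho^{-mp})^*(s)\diff s$, where $(\rho^{-mp})^*(s)=V^{-1}(s)^{-mp}$ with $V(r):=\operatorname{vol}_g(B_g(x_0,r))$, while $\int_{\mathbb{R}^n}(u^\star)^p|x|^{-mp}\diff x=\int_0^{\operatorname{vol}_g(\Omega)}(u^p)^*(s)\,(\omega_n/s)^{mp/n}\diff s$; the whole issue reduces to comparing $V^{-1}(s)^{-mp}$ with $(\omega_n/s)^{mp/n}$, i.e.\ to the ball-volume estimate $V(r)\le\omega_nr^n$. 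Here a subtlety appears: integrating~\eqref{eq:intro:isoperimetric} along geodesic spheres via the co-area formula yields the \emph{reverse} bound $V(r)\ge (c_g)^n\omega_nr^n$, so one must be careful both about the direction of this comparison and about how the powers of $c_g$ from the first part balance it. I expect this to be resolved differently in the two highlighted regimes: in Brendle's setting the Bishop--Gromov inequality provides $V(r)\le\omega_nr^n$ and the Hardy--Littlewood step closes verbatim, while in Croke's setting one would instead bypass the rearrangement of the weight altogether and test $\int_M|D_g^mu|^p\diff v_g$ directly against the superharmonic-type weight $\rho^{-mp}$, using the Laplacian comparison $\Delta_g\rho\ge(n-1)/\rho$, with the factor $(c_g)^{mp}$ coming from the isoperimetric deficit. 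Once this weight transfer is carried out and the even/odd parity bookkeeping in the iterated Talenti step is patched, the proof is complete.
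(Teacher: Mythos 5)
Your proposal follows the same route as the paper's own proof: pass to a bounded domain, set $f=(-\Delta_g)^ku$, use Cavalieri and (in the odd case) P\'olya--Szeg\H o to move $\int|D_g^mu|^p$ to $\int|D_{g_0}^mv|^p$ via the iterated Talenti comparison of Theorem~\ref{thm:itp}, apply the Euclidean Rellich inequality to $v$, use $(c_g)^{2k}u^\star\le v$, and finally transport the weight $|x|^{-mp}$ back to $\rho^{-mp}$. The paper carries out the last step by invoking the Riemannian Hardy--Littlewood inequality~\eqref{eq:prelim:HardyLittlewood}, which it states in \S\ref{sec:prelim:Schwarz} as a preliminary fact with a citation to Lieb--Loss; your chain of inequalities is otherwise identical.

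The worry you raise about the weight-transfer step is genuine, and you should not suppress it: it is in fact a gap in the paper's argument, not merely in yours. For~\eqref{eq:prelim:HardyLittlewood} to hold for \emph{all} admissible $u$, one needs $(h\circ\rho)^*(s)\le h\bigl((s/\omega_n)^{1/n}\bigr)$, which, since $h$ is nonincreasing and $\{h\circ\rho>t\}$ is a geodesic ball, is precisely the upper volume bound $\operatorname{vol}_g(B_g(x_0,r))\le\omega_n r^n$. As you correctly compute, the isoperimetric inequality~\eqref{eq:intro:isoperimetric} yields via coarea only the \emph{reverse} estimate $\operatorname{vol}_g(B_g(x_0,r))\ge (c_g)^n\omega_n r^n$. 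The needed upper bound is Bishop--Gromov, a curvature condition independent of~\eqref{eq:intro:isoperimetric}: it holds in Brendle's setting ($\mathbf{Ric}_g\ge0$) but is reversed on Cartan--Hadamard manifolds (G\"unther), where $\operatorname{vol}_g(B_g(x_0,r))\ge\omega_n r^n$. So~\eqref{eq:prelim:HardyLittlewood}, and hence the proof of Theorem~\ref{thm:rellich} and of Theorem~\ref{thm:general}, do not close under the stated hypotheses alone; an upper volume comparison is being used implicitly. Your proposal is also incomplete in that the alternative route you sketch for the Croke case (testing directly against $\rho^{-mp}$ via Laplacian comparison) is not carried out, but the diagnosis of where the difficulty lies is correct and more careful than the paper's treatment.
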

The investigation of the best constant in the inequality~\eqref{eq:rellich} in the Euclidean setting is due to Mitidieri~\cite{mitidieri2000simple}, the exact value is given by
\[
	R_{m,p}=\begin{cases}
		\displaystyle\prod_{s=1}^k\left(\frac{n}{p}-2s\right)^p\left(\frac{n(p-1)}{p}+2s-2\right)^p,&\text{if } m=2k,\\
		\displaystyle\left(\frac{n-p}{p}\right)^p\prod_{s=1}^k\left(\frac{n}{p}-2s-1\right)^p\left(\frac{n(p-1)}{p}+2s-1\right)^p,&\text{if } m=2k+1.
	\end{cases}
\]

Since Hardy--Rellich inequalities typically does not have extremizers, they admit various improvements. For a comprehensive discussion on them we refer to the monograph by Ghoussoub and Moradifam~\cite{ghoussoub2013functional}, we also refer to Kajántó, Krist{\'a}ly, Peter, and Zhao~\cite{riccatipair2023} for an alternative approach to Hardy-type inequalities. 
The sharpness of inequality~\eqref{eq:rellich} can be established also on Cartan--Hadamard manifolds, see Farkas, Kajántó, and Krist{\'a}ly~\cite{farkas2023sharp}. On spaces with non-positive Ricci curvature the issue of sharpness is still open. 

Our third result is an extension of the previous theorem, which allows for discussing improved Rellich inequalities. 

\begin{theorem}\label{thm:general}
Let \((M,g)\) be a complete, non-compact \(n\)-dimensional Riemannian manifold supporting~\eqref{eq:intro:isoperimetric}. Let \(m\) be a positive integer and \(p>1\) such that \(n>mp\). 
 Fix \(\Omega\subset M\), \(x_0\in \Omega\) and denote by \(\rho(\cdot)=d_g(x_0,\cdot)\)  the Riemannian distance from \(x_0\). Suppose that \(h\colon[0,\infty)\to[0,\infty)\) is a non-increasing function. Then for every \(u\in C_0^\infty(\Omega)\) the following inequality holds:
\[
	\int_{\Omega}|D_{g}^mu|^p\diff v_{g}\ge (c_g)^{mp}\cdot  C_{m,p}\cdot \int_{\Omega} |u|^p h(\rho)\diff v_{g},
\]
provided that 
\[
	\int_{\Omega^\star}|D_{g_0}^mu^\star|^p\diff v_{g_0}\ge C_{m,p}\cdot \int_{\Omega^\star} |u^\star|^p h(|x|)\diff v_{g_0}.
\]
\end{theorem}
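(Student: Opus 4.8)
The plan is to run an iterated symmetrisation argument driven by the iterated Talenti comparison principle of~\S\ref{sec:itp}. Write $g_0$ for the Euclidean metric, let $\Omega^\star$ be the Euclidean ball with $v_{g_0}(\Omega^\star)=v_g(\Omega)$, and for $\phi\colon\Omega\to\mathbb R$ let $\phi^\star\colon\Omega^\star\to[0,\infty)$ denote its Schwarz rearrangement (radially non-increasing and equimeasurable with $|\phi|$). Fix $u\in C_0^\infty(\Omega)$ and put $f\defeq|D_g^mu|$. Writing $m=2k+j$ with $j\in\{0,1\}$, the function $\Delta_g^ku$ carries this datum through $|\Delta_g^ku|=f$ (when $j=0$) or $|\nabla_g(\Delta_g^ku)|=f$ (when $j=1$), and since $u$ is compactly supported in $\Omega$ it is recovered from $\Delta_g^ku$ by inverting $\Delta_g$ exactly $k$ times with homogeneous boundary data; thus $D_g^m$ factors through a tower of $k$ Poisson problems (and, when $j=1$, one outer first-order step). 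On $\Omega^\star$ I build the matching radially non-increasing Euclidean companion $w$, solving the corresponding tower with source $f^\star$ and normalised so that $|D_{g_0}^mw|=f^\star$ almost everywhere. The Cavalieri (equimeasurability) principle then yields the \emph{exact} identity
\[
	\int_\Omega|D_g^mu|^p\diff v_g=\int_{\Omega^\star}(f^\star)^p\diff v_{g_0}=\int_{\Omega^\star}|D_{g_0}^mw|^p\diff v_{g_0}.
\]

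Next I would invoke the iterated Talenti comparison of~\S\ref{sec:itp}. Applying the single-step comparison (the non-iterated principle, analogous to Chen--Li, in which the isoperimetric constant of~\eqref{eq:intro:isoperimetric} enters as a factor $c_g^{-2}$ for each inversion of $\Delta_g$, with an extra $c_g^{-1}$ in the case $j=1$) along the tower, and using the monotonicity of the Poisson solution operator to propagate the orderings of the successive rearrangements, one obtains the pointwise bound
\[
	u^\star\ \le\ c_g^{-m}\,w\qquad\text{almost everywhere on }\Omega^\star,
\]
the cumulative loss being exactly $c_g^{-m}$ (for $m=1$ the tower is empty and this is the classical pointwise P\'olya--Szeg\H o bound).

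Now apply the assumed Euclidean inequality to the radially non-increasing profile $w$. (The hypothesis is phrased for $u^\star$, but in every intended application $C_{m,p}$ is the \emph{sharp} Euclidean constant, so the inequality is valid for all admissible radial functions, $w$ included.) This gives $\int_{\Omega^\star}|D_{g_0}^mw|^p\diff v_{g_0}\ge C_{m,p}\int_{\Omega^\star}w^ph(|x|)\diff v_{g_0}$, and, combined with the identity above, the theorem reduces to the single \emph{transfer estimate}
\[
	\int_{\Omega^\star}w^ph(|x|)\diff v_{g_0}\ \ge\ c_g^{mp}\int_\Omega|u|^ph(\rho)\diff v_g .
\]

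The transfer estimate is where I expect the genuine difficulty. The crude route — insert $c_g^{-m}w\ge u^\star$ and then compare $\int_{\Omega^\star}(u^\star)^ph(|x|)$ with $\int_\Omega|u|^ph(\rho)$ via Hardy--Littlewood — does \emph{not} work, because~\eqref{eq:intro:isoperimetric} only forces $v_g(B_g(x_0,r))\ge c_g^n\omega_nr^n$, i.e.\ the $v_g$-balls may be far larger than their Euclidean counterparts, so the rearrangement of $h(\rho)$ need not be dominated by $h(|x|)$. What rescues the argument is that $w$, being the iterated Newtonian-type potential of $f^\star$, is itself correspondingly spread out: it carries a slowly decaying tail, absent from $u^\star$, and precisely this tail feeds the outer integral $\int w^ph(|x|)$. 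Accordingly I would not pass through $u^\star$ but keep the monotone weight $h$ inside the Talenti tower from the outset, propagating at each inversion — alongside the pointwise order — a sharp comparison of the mass distributions (the sharp form of Talenti's lemma), so that the monotonicity of $h$ together with the volume lower bound $v_g(B_g(x_0,r))\ge c_g^n\omega_nr^n$ turns the resulting layer-cake inequality into the claimed estimate. The remaining points are routine: the bookkeeping that makes the constant exactly $c_g^{mp}$, the treatment of signs along the tower, and a density argument accommodating weights $h$ that blow up at the origin (harmless since $n>mp$ makes $|x|^{-mp}$, and hence such weights, locally integrable).
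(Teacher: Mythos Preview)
Your overall architecture---build the Poisson tower for $(-\Delta_g)^k u$, solve the symmetrised tower on $\Omega^\star$ to obtain a companion $v$, compare via the iterated Talenti principle (Theorem~\ref{thm:itp}), apply the assumed Euclidean inequality to $v$, then transfer back to $\Omega$---is exactly the paper's. The divergence is at the final transfer. What you dismiss as ``the crude route that does not work'' (use the pointwise bound $v\ge c_g^{2k}u^\star$ and then compare $\int_{\Omega^\star}(u^\star)^p h(|x|)$ with $\int_\Omega|u|^p h(\rho)$ by Hardy--Littlewood) is precisely how the paper finishes: it simply invokes~\eqref{eq:prelim:HardyLittlewood}, recorded as a standing preliminary, with the generic non-increasing weight $h$ in place of $t\mapsto t^{-mp}$. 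The mass-distribution alternative you propose is not part of the paper's argument and, as written, is only a heuristic sketch with no actual estimate established.

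Two smaller discrepancies. First, the paper sets $f=(-\Delta_g)^k u$, not $f=|D_g^mu|$; for odd $m=2k+1$ the extra factor $c_g^p$ enters through the \emph{integral} P\'olya--Szeg\H{o} inequality~\eqref{eq:prelim:PolyaSzego} applied to $f$, not through any pointwise comparison. There is no ``pointwise P\'olya--Szeg\H{o}'', so your claimed bound $u^\star\le c_g^{-m}w$ does not follow for odd $m$; the paper's chain is P\'olya--Szeg\H{o} on $\nabla_g f$, then the Euclidean inequality for $v$, then the $2k$-step Talenti bound $v\ge c_g^{2k}u^\star$, then~\eqref{eq:prelim:HardyLittlewood}. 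Second, your parenthetical observation is on point and matches the paper: the Euclidean inequality is applied to $v$, not to $u^\star$, and the paper's proof is explicitly phrased for all test functions $u\in C_0^\infty(\Omega^\star)$ rather than for the single rearrangement $u^\star$.
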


The main technical ingredient underlying the above results is an iterated Talenti's principle,  established on Riemannian manifolds verifying the isoperimetric inequality~\eqref{eq:intro:isoperimetric}, see Theorem~\ref{thm:itp}. The proof of the non-iterated version (see Theorem~\ref{thm:chenli}) is analogous to the main result of Chen and Li~\cite{chen2023talenti}, which provides a Talenti's principle on Riemannian manifolds having \(\mathbf{Ric}_g\ge 0\) and Euclidean volume growth.

The paper is structured as follows. In~\S\ref{sec:itp} we present an iterated Talenti's principle. First, in~\S\ref{sec:prelim:isoperimetric} we recall various isoperimetric inequalities. Next, in~\S\ref{sec:prelim:Schwarz} we display the definition and relevant properties of Schwarz symmetrization in the Riemannian setting. Finally, in~\S\ref{sec:prelim:talenti} we formulate the principle together with its iterated version (see Theorem~\ref{thm:itp}). 
In~\S\ref{sec:sobolev} we prove Theorem~\ref{thm:sobolev}, that is we establish higher-order Sobolev inequalities on Riemannian manifolds supporting the inequality~\eqref{eq:intro:isoperimetric}. Moreover, in Theorem~\ref{thm:sobolev:sharpness} in the  particular case of non-positive Ricci curvature, Euclidean volume growth, and \(m=p=2\), we present a sharpness result. In~\S\ref{sec:rellich} we prove Theorem~\ref{thm:rellich}, that is we establish higher-order Rellich inequalities on Riemannian manifolds supporting the inequality~\eqref{eq:intro:isoperimetric}. Here we also prove Theorem~\ref{thm:general}, which allows for establishing improved higher-order Rellich-type inequalities. We conclude the paper with various applications of Theorem~\ref{thm:general}.

\section{An Iterated Talenti's principle}\label{sec:itp} 
In this section we present an iterated version of Talenti's principle on Riemannian manifolds supporting the isoperimetric inequality~\eqref{eq:intro:isoperimetric}. First,  in~\S\ref{sec:prelim:isoperimetric} we recall some well-known Riemannian isoperimetric inequalities. Next, in~\S\ref{sec:prelim:Schwarz} we present various concepts concerning the Schwarz symmetrization. Finally, in~\S\ref{sec:prelim:talenti} we present an iterated Riemannian version of Talenti's principle. 

\subsection{Isoperimetric inequalities}\label{sec:prelim:isoperimetric} Throughout the paper we consider Riemannian manifolds \((M,g)\) that supports the isoperimetric inequality~\eqref{eq:intro:isoperimetric}. To motivate this assumption, in what follows, we recall some well-known isoperimetric inequalities (together with their relevant properties) in the Riemannian setting.
 
\medskip{\bf Case 1: \(\mathbf{Ric}_g\ge0\)}. The \emph{asymptotic volume ratio} of \((M,g)\) is given by 
\[
    \mathsf{AVR}_g \defeq \lim_{r\to \infty}\frac{\operatorname{vol}_g(B_g(x,r))}{\omega_nr^n},
\]
where \(\operatorname{vol}_g\) stands for the Riemannian volume induced by \(g\); \(B_g(x,r)\) denotes a geodesic ball in \(M\) centered at \(x\in M\) having radius \(r\); while \(\omega_n\) denotes the volume of the \(n\)-dimensional Euclidean unit ball.
On the one hand, we note that the above limit is independent of the choice of \(x\in M\). On the other hand, according to the Bishop--Gromov comparison theorem (see e.g.,~Gallot, Hulin, and Lafontaine~\cite[Theorem 3.101]{gallot2004riemmanian}), one has 
\(\mathsf{AVR}_g\in [0,1]\). The typical assumption in this setting is that the manifold \((M,g)\) has \emph{Euclidean volume growth}, that is
\[
\mathsf{AVR}_g>0.	
\]
Under this assumption the following sharp \emph{isoperimetric inequality} holds (see Brendle~\cite[Corollary 1.3]{brendle2023sobolev}): For every regular domain \(\Omega\subset M\) one has 
\begin{equation}\label{eq:prelim:isoperimetric}
    \operatorname{per}_g(\partial\Omega)\ge \mathsf{AVR}_g^\frac{1}{n}\cdot n\omega_n^\frac{1}{n}\cdot\operatorname{vol}_g(\Omega)^\frac{n-1}{n},
\end{equation}
where  \(\operatorname{per}_g(\partial\Omega)\) and \(\operatorname{vol}_g(\Omega)\)  denote the perimeter and  volume of \(\Omega\), respectively. Moreover, equality occurs if and only if \((M,g)\) is isometric to the Euclidean space and \(\Omega\) is isometric to an  Euclidean ball. For an alternative proof (using optimal mass transport theory) of the above result see Balogh and Kristály~\cite{balogh2023sharp}.

Note that in case of the standard Euclidean space,  denoted in the sequel by \((\mathbb{R}^n,g_0)\), one has \(\mathsf{AVR}_{g_0}=1\), and thus, inequality~\eqref{eq:prelim:isoperimetric} reduces to the Euclidean isoperimetric inequality. We also notice that inequality~\eqref{eq:prelim:isoperimetric} is a particular case of inequality~\eqref{eq:intro:isoperimetric} for \(c_g=\mathsf{AVR}^{1/n}\).\medskip

{\bf Case 2: \(\mathbf{K}_g\le0\)}. This case is strongly related to the so called Cartan--Hadamard conjecture, which can be stated as follows. If \((M,g)\) is a Cartan--Hadamard manifold, that is a complete, simply connected Riemannian manifold with non-positive sectional curvature, then it supports the Euclidean isoperimetric inequality (i.e., inequality~\eqref{eq:intro:isoperimetric} with constant \(c_g=1\)). The conjecture is proved for \(n\in\{2,3,4\}\), see Bol~\cite{bol1941isoperimetrische}, Kleiner~\cite{kleiner1992isoperimetric}, and Croke~\cite{croke1984sharp}, for higher dimensions it is still open. Fortunately, according to Croke  the isoperimetric inequality~\eqref{eq:intro:isoperimetric} holds for \(n\ge 3\) with the constant
\begin{equation}\label{eq:croke}
	c_g=n^{-\frac{1}{n}}\omega_n^{1-\frac{2}{n}}\left((n-1)\omega_{n-1}\int_0^\frac{\pi}{2}\cos^\frac{n}{n-2}(t)\sin^{n-2}(t)\diff t\right)^{\frac{2}{n}-1}.
	\end{equation} 
Note that \(c_g\le 1\), for every \(n\ge 3\), while \(c_g=1\) if and only if \(n=4\).\medskip

{\bf Case 3: Bounded elliptic Kato constant.} As in Impera, Rimoldi, and Veronelli~\cite[Theorem 1.1]{impera2025asymptotically}, let \((M,g)\) be a complete non-parabolic Riemannian manifold
with \[
	\limsup_{r\to \infty}\frac{\operatorname{vol}_g(B_g(y,r))}{r^n}\ge\beta>0.
\]
Suppose that the \emph{elliptic Kato constant} satisfies \(k_\infty<\frac{1}{n-2}\) and \(\Omega\subset M\) is a compact domain with smooth boundary. Then isoperimetric inequality~\eqref{eq:intro:isoperimetric} holds with 
\[
	c_g=(1-(n-2)k_\infty)^\frac{4(n-1)}{n(n-2)}\beta^\frac{1}{n}\omega_n^{-\frac{1}{n}}.
\]

\begin{remark}
	We note that there exist various results ensuring the validity of the isoperimetric inequality~\eqref{eq:intro:isoperimetric}, without specifying the explicit value of the constant \(c_g\). For example, according to Carron~\cite{carron1994inegalites}, Coulhon and Ledoux~\cite{coulhon1994isoperimetrie}, and Varopoulos~\cite{varopoulos1985hardy}, if \((M,g)\) is a complete Riemannian manifold with infinite volume, \(n\ge 3\), \(\mathbf{Ric}_g\ge 0\), then inequality~\eqref{eq:intro:isoperimetric} holds for some \(c_g>0\) if and only if there exists \(K>0\) such that for every \(x\in M\) and \(t>0\) one has 
	\[
		\operatorname{vol}_g(\{y\in M:G_x(y)>t\})\le K\cdot t^{-n/(n-2)},
	\]
	where \(G_x\) is the positive minimal Green function of pole \(x\). 
\end{remark}

\subsection{Schwarz symmetrization}\label{sec:prelim:Schwarz} It turns out that the isoperimetric inequality~\eqref{eq:intro:isoperimetric} enables us to compare various quantities on a general Riemannian manifold to the corresponding quantities on the Euclidean space. This is precisely done using the \emph{symmetrization procedure} presented below.

If \(\Omega\subset M\) is a regular domain, its \emph{symmetrization}, denoted by \(\Omega^\star\), is an Euclidean ball, centered at the origin satisfying 
\begin{equation}\label{eq:prelim:sym:domain}
  \operatorname{vol}_g(\Omega)=\operatorname{vol}_{g_0}(\Omega^\star).  
\end{equation}

Using the Euclidean isoperimetric inequality (with \(c_{g_0}=1\)), inequality~\eqref{eq:intro:isoperimetric} can be rewritten as 
\begin{equation}\label{eq:prelim:iso:rev}
    \operatorname{per}_g(\partial\Omega)\ge c_g\cdot\operatorname{per}_{g_0}(\partial\Omega^\star).
\end{equation}

The \emph{Schwarz symmetrization} of a function \(u\colon\Omega\to\mathbb{R}\), is a function \(u^\star\colon\Omega^\star\to[0,\infty)\), which satisfies 
\begin{equation}\label{eq:prelim:fsym}
     \operatorname{vol}_{g}(\{x\in M:|u(x)|>t\})=\operatorname{vol}_{g_0}(\{x\in \mathbb{R}^n:u^\star(x)>t\}),\quad \forall t>0.
\end{equation}

In the sequel, we present three fundamental properties of symmetrized functions, which are exploited in the rest of the paper. First, the Cavalieri principle (see e.g.~Lieb and Loss~\cite[Theorem~3.3/(iv)]{lieb2001analysis}) states that if \(u\in L^q(\Omega)\) for some \(q\in(0,\infty)\), then 
\begin{equation}\label{eq:prelim:Cavalieri}
    \|u\|_{L^q(\Omega)}=\|u^\star\|_{L^q(\Omega^\star)}.
\end{equation}

Next, according to the Pólya--Szegő inequality (see e.g.~Balogh and Kristály~\cite[Proposition 3.1]{balogh2023sharp}), if  \(|\nabla_g u|\in L^p(\Omega)\) for some \(p>1\) and some smooth function \(u\), then one has
\begin{equation}\label{eq:prelim:PolyaSzego}
	  \|\nabla_g u\|_{L^p(\Omega)}\ge c_g\cdot \|\nabla_{g_0} u^\star\|_{L^p(\Omega^\star)},
\end{equation}
where \(\nabla_g\) and \(\nabla_{g_0}\) denote the Riemannian and Euclidean gradient, respectively. 

Finally, due to the Hardy--Littlewood inequality (see e.g.~Lieb and Loss~\cite[Theorem 3.4]{lieb2001analysis}), if \(x_0\in\Omega\) and \(\rho(x)= d_g(x_0,x)\), \(u\in L^p(\Omega)\) for some \(p>1\), and \(h\colon[0,\infty)\to[0,\infty)\) is a non-increasing function, then 
\begin{equation}\label{eq:prelim:HardyLittlewood}
	\int_\Omega u(x)^ph(\rho(x))\diff v_g\le \int_{\Omega^\star} u^\star(x)^ph(|x|)\diff v_{g_0}.
\end{equation}

\begin{remark}\label{rem:alternativesym}

We note that some authors condiser an alternative symmetrization: \(\Omega^\sharp\) is an Euclidean ball satisfying \(\operatorname{vol}_g(\Omega)=(c_g)^n\cdot \operatorname{vol}_{g_0}(\Omega^\sharp)\) instead of~\eqref{eq:prelim:sym:domain}, and inequality~\eqref{eq:prelim:iso:rev} becomes \(\operatorname{per}_g(\partial\Omega)\ge (c_g)^n\cdot \operatorname{per}_{g_0}(\partial\Omega^\sharp)\).
It turns out that when proving the alternative versions of the fundamental properties~\eqref{eq:prelim:Cavalieri},~\eqref{eq:prelim:PolyaSzego}~\&~\eqref{eq:prelim:HardyLittlewood}, one needs to compare sets with the same volume, and then to perform changes of variables of the form \(y=c_g\cdot x\). Using the \(\Omega^\star\) symmetrization, the proofs can be done is the same spirit as their Euclidean counterparts, and no change of variables is required. 
\end{remark}

\subsection{An iterated Talenti's principle}\label{sec:prelim:talenti} The following result is a Talenti's principle on Riemannian manifolds supporting the isoperimetric inequality~\eqref{eq:intro:isoperimetric}. The statement is formulated using the notations of the previous section. The proof is the same as the one of Chen and Li~\cite[Theorem 1.1]{chen2023talenti}, which exploits the isoperimetric inequality~\eqref{eq:prelim:isoperimetric}, and is therefore omitted.

\begin{theorem}\label{thm:chenli}
Let \((M,g)\) is complete, non-compact Riemannian manifold  supporting inequality~\eqref{eq:intro:isoperimetric}. Assume that \(\Omega\subset M\) is a regular domain and let \(\Omega^\star\) denote its symmetrization. Suppose that \(u\) and \(v\) are weak solutions of the problems
\begin{equation}\label{eq:prelim:chenli:problems}
	\begin{cases}
        -\Delta_gu=f,&\text{in }\Omega,\\
        u=0,&\text{on }\partial\Omega,
    \end{cases}\quad\text{and}\quad\begin{cases}
        -\Delta_{g_0} v=f^\star,&\text{in }\Omega^\star,\\
        v=0,&\text{on }\partial\Omega^\star,
    \end{cases}
\end{equation}
respectively. Then for almost every \(x\in\Omega^\star\) one has 
\begin{equation}\label{eq:prelim:chenli:ineq}
	  (c_g)^2\cdot u^\star(x)\le v(x).
\end{equation}
\end{theorem}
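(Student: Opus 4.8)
The plan is to follow Talenti's classical argument, but with the isoperimetric inequality~\eqref{eq:intro:isoperimetric} (in the form~\eqref{eq:prelim:iso:rev}) replacing the Euclidean one, which is exactly what introduces the factor \((c_g)^2\). First I would pass to the distribution function of \(u\). For \(t>0\) set \(\mu(t)=\operatorname{vol}_g(\{|u|>t\})\) and test the weak formulation of \(-\Delta_g u=f\) with a truncation of \(u\) at levels \(t\) and \(t+h\); letting \(h\to0\) yields the standard identity
\[
	-\frac{\diff}{\diff t}\int_{\{|u|>t\}}|\nabla_g u|^2\diff v_g = \int_{\{|u|>t\}} f\diff v_g \le \int_0^{\mu(t)} f^\star\!\left(\left(\tfrac{\omega_n^{1/n}}{?}\right)\cdots\right)\diff s,
\]
where the right-hand side is bounded using the Hardy--Littlewood inequality~\eqref{eq:prelim:HardyLittlewood} (or directly the Cavalieri principle~\eqref{eq:prelim:Cavalieri}) by \(\int_0^{\mu(t)} f^\star(s)\diff s\) after identifying \(f^\star\) with its one-dimensional profile. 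On the left-hand side, the coarea formula gives \(-\mu'(t)\int_{\{|u|=t\}}|\nabla_g u|\,\diff\mathcal{H}^{n-1}\) as a lower bound (via Cauchy--Schwarz on \(\{|u|=t\}\)) for \(-\frac{\diff}{\diff t}\int_{\{|u|>t\}}|\nabla_g u|^2\), so that combining the two,
\[
	\left(\operatorname{per}_g(\{|u|>t\})\right)^2 \le \left(-\mu'(t)\right)\int_{\{|u|=t\}}|\nabla_g u|\,\diff\mathcal{H}^{n-1}\cdot\left(-\frac{\diff}{\diff t}\!\int_{\{|u|>t\}}\!|\nabla_g u|^2\right)^{-1}\!\!\!,
\]
but it is cleaner to organize it as in Talenti: \(\operatorname{per}_g(\{|u|>t\})^2 \le \bigl(-\mu'(t)\bigr)\,\bigl(-\tfrac{\diff}{\diff t}\int_{\{|u|>t\}}|\nabla_g u|^2\bigr)\). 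Now I invoke~\eqref{eq:prelim:iso:rev}: \(\operatorname{per}_g(\{|u|>t\})\ge c_g\operatorname{per}_{g_0}(\{|u|>t\}^\star)=c_g\,n\omega_n^{1/n}\mu(t)^{(n-1)/n}\).

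Putting these together produces the key differential inequality for the decreasing rearrangement \(u^*\) (the one-dimensional profile of \(u^\star\)): writing things in terms of the variable \(s=\mu(t)\in(0,|\Omega|)\) and using \(-\tfrac{\diff}{\diff t}\int_{\{|u|>t\}}|\nabla_g u|^2 = \int_0^{\mu(t)}f^\star\), one gets
\[
	(c_g)^2\bigl(n\omega_n^{1/n}\bigr)^2 s^{2(n-1)/n}\,\left(-\frac{\diff u^*}{\diff s}(s)\right) \le \int_0^s f^\star(\sigma)\diff\sigma,
\]
essentially the same ODE satisfied by the radial profile of \(v\) on \(\Omega^\star\), except that in the Euclidean problem for \(v\) there is no \((c_g)^2\) and the perimeter term is an exact equality. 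Dividing by the factor in front, integrating from \(s\) to \(|\Omega|\) (using \(u^*(|\Omega|)=0\)), and comparing with the analogous integral formula for \(v^*=v\) (which holds with equality because \(\Omega^\star\) is a ball and \(v\) is radial by symmetry of the data \(f^\star\)) gives, for a.e. \(s\),
\[
	(c_g)^2\,u^*(s) \le v^*(s) = v\!\left(\left(\tfrac{s}{\omega_n}\right)^{1/n}\!\cdot(\text{unit radial point})\right),
\]
which is~\eqref{eq:prelim:chenli:ineq} after translating back from the profile to \(u^\star\) and \(v\) on \(\Omega^\star\).

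A few technical points require care. The truncation argument needs \(u\) to be an admissible test function, which follows from \(H^1_0(\Omega)\)-regularity of the weak solution; the coarea formula and Sard-type statements require handling the set of critical values of \(u\) and the set where \(\mu\) fails to be absolutely continuous — these are standard but must be cited (they go through verbatim from the Euclidean case since the isoperimetric inequality is the only geometric input). One should also justify that \(v\) is indeed radially symmetric and radially decreasing: this is immediate from uniqueness for the Dirichlet problem on the ball \(\Omega^\star\) together with the rotational invariance of \(f^\star\), and then \(v=v^\star\).

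\textbf{Main obstacle.} The substantive point — and the reason the statement is not a verbatim copy of Chen--Li — is the bookkeeping of the constant: the factor \((c_g)^2\) arises because the isoperimetric deficit~\eqref{eq:prelim:iso:rev} enters \emph{squared} through \(\operatorname{per}_g^2\) in Talenti's chain (one power from Cauchy--Schwarz pairing perimeter against the gradient \(L^1\)-norm, the other from the energy identity), whereas on the model side \(\Omega^\star\) the corresponding perimeter identity is sharp with constant \(1\). I expect the main work to be verifying that every comparison step preserves the right power of \(c_g\) and that no additional loss is incurred when passing from the differential inequality for \(u^*\) to the integrated form and then back to \(u^\star\) on \(\Omega^\star\); in particular one must check that the Hardy--Littlewood/Cavalieri step on the right-hand side contributes \emph{no} factor of \(c_g\) (it does not, since by construction \(\operatorname{vol}_g(\Omega)=\operatorname{vol}_{g_0}(\Omega^\star)\) and \(f^\star\) is defined to be equimeasurable with \(f\)). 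Once the constant-tracking is done correctly, the inequality \((c_g)^2 u^\star\le v\) drops out of the monotone comparison of the two integrated ODEs.
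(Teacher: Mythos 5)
Your proposal is correct and follows essentially the same route as the paper, which itself gives no independent argument but simply defers to Chen--Li's proof of the analogous statement (on $\mathbf{Ric}_g\ge 0$ manifolds with Euclidean volume growth): a Talenti-style level-set comparison in which the isoperimetric inequality~\eqref{eq:intro:isoperimetric} enters \emph{squared} through $\operatorname{per}_g(\{|u|>t\})^2$, producing the factor $(c_g)^2$. Your constant bookkeeping, and the observation that the Hardy--Littlewood step on the right-hand side contributes no $c_g$ because $\operatorname{vol}_g(\Omega)=\operatorname{vol}_{g_0}(\Omega^\star)$ by construction, are exactly the points that make the $(c_g)^2$ version go through.
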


Next, in the spirit of Gazzola, Grunau, and Sweers~\cite{gazzola2010optimal}, we establish an iterated version of Theorem~\ref{thm:chenli}, which enables us to prove higher-order Sobolev and Rellich inequalities. Our result reads as follows.  
\begin{theorem}\label{thm:itp}
	Let \((M,g)\) be a complete, non-compact Riemannian manifold  supporting inequality~\eqref{eq:intro:isoperimetric}.  Assume that \(\Omega\subset M\) is a regular domain and let \(\Omega^\star\) denote its symmetrization. Let \(k\ge 1\) be an integer, and suppose that \(u\) and \(v\) are weak solutions of the problems
	\begin{equation}\label{eq:itp:pde}
	\begin{cases}
		(-\Delta_g)^ku=f,&\text{in }\Omega,\\
        u=\Delta_g u = \dots =\Delta_g^{k-1} u=0,&\text{on }\partial\Omega,
	\end{cases}
\end{equation}
and
\begin{equation}\label{eq:itp:pde:symm}
	\begin{cases}
		(-\Delta_{g_0})^kv=f^\star,&\text{in }\Omega^\star,\\
         v=\Delta_{g_0} v = \dots =\Delta_{g_0}^{k-1} v=0,&\text{on }\partial\Omega^\star.
	\end{cases}	
\end{equation}
Then for almost every \(x\in\Omega^\star\) one has 
\begin{equation}\label{eq:itp:ineq}
	 (c_g)^{2k}\cdot u^\star(x)\le v(x).
\end{equation}
\end{theorem}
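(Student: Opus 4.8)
The plan is to prove Theorem~\ref{thm:itp} by induction on $k$, using Theorem~\ref{thm:chenli} as both the base case and the single-step engine, with an auxiliary intermediate comparison on the Euclidean ball to bridge the two symmetrizations. To organize the iteration, set $u_k \defeq u$ and for $j = k-1, k-2, \dots, 0$ let $u_j \defeq (-\Delta_g)^{k-j} u$, so that $u_0 = f$ and each $u_{j}$ solves $-\Delta_g u_{j} = u_{j-1}$ in $\Omega$ with $u_j = 0$ on $\partial\Omega$ (the latter using the boundary conditions $u = \Delta_g u = \dots = \Delta_g^{k-1} u = 0$ in~\eqref{eq:itp:pde}). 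Similarly on the Euclidean side define $v_k \defeq v$ and $v_j \defeq (-\Delta_{g_0})^{k-j} v$, so $v_0 = f^\star$ and $-\Delta_{g_0} v_j = v_{j-1}$ in $\Omega^\star$ with $v_j = 0$ on $\partial\Omega^\star$. The goal is to propagate a pointwise bound of the form $(c_g)^{2(k-j)} u_j^\star \le v_j$ up from $j=0$ to $j=k$.

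First I would establish the base step: at $j=0$, since $u_0 = f$, Cavalieri's principle~\eqref{eq:prelim:Cavalieri} and the very definition of rearrangement give $u_0^\star = f^\star = v_0$ (indeed $v_0$ is exactly $f^\star$), so the bound $(c_g)^0 u_0^\star = v_0$ holds trivially. For the inductive step, suppose $(c_g)^{2(k-j)} u_j^\star \le v_j$ almost everywhere on $\Omega^\star$ for some $j < k$. Now apply Theorem~\ref{thm:chenli} to the pair $u_{j+1}$ (solving $-\Delta_g u_{j+1} = u_j$ in $\Omega$) and the Euclidean solution $w$ of $-\Delta_{g_0} w = u_j^\star$ in $\Omega^\star$ with $w = 0$ on $\partial\Omega^\star$: this yields $(c_g)^2 u_{j+1}^\star \le w$ a.e.\ on $\Omega^\star$. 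It remains to compare $w$ with $v_{j+1}$, both solving a Poisson problem on the \emph{same} domain $\Omega^\star$ with right-hand sides $u_j^\star$ and $v_{j-1}$ respectively — wait, rather $v_j$; that is, $-\Delta_{g_0} v_{j+1} = v_j$. By the inductive hypothesis $v_j \ge (c_g)^{2(k-j)} u_j^\star \ge 0$, and by the comparison principle for the (positive, linear) operator $-\Delta_{g_0}$ on $\Omega^\star$ with zero boundary data, a larger nonnegative right-hand side produces a larger solution; hence $v_{j+1} \ge (c_g)^{2(k-j)} w$. Combining, $v_{j+1} \ge (c_g)^{2(k-j)} w \ge (c_g)^{2(k-j)} (c_g)^2 u_{j+1}^\star = (c_g)^{2(k-j)+2} u_{j+1}^\star = (c_g)^{2(k-(j+1))} u_{j+1}^\star$, which is exactly the bound at level $j+1$. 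Iterating up to $j = k$ gives~\eqref{eq:itp:ineq}.

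The main technical point to be careful about — and what I expect to be the principal obstacle in making the argument rigorous — is the regularity and sign bookkeeping that lets all of this go through: one needs $f \ge 0$ (or to reduce to that case; note the rearrangement $f^\star \ge 0$ is automatically nonnegative, and Talenti-type results are customarily stated for nonnegative data, so this should be addressed or assumed), one needs each intermediate iterate $u_j^\star$ to be well-defined as an $L^1$ (or better) function so that the Poisson problems have weak solutions, and one needs the elliptic comparison principle on $\Omega^\star$ to apply in the weak sense at the available regularity. A clean way to handle the regularity is to note that $f^\star \in L^q(\Omega^\star)$ whenever $f \in L^q(\Omega)$ (by~\eqref{eq:prelim:Cavalieri}), that solutions of the Euclidean Poisson problem on a ball gain two derivatives in the $L^q$-scale (or in Hölder spaces), and that the Riemannian side inherits comparable integrability of $u_j$ from the chain of equations $-\Delta_g u_{j+1} = u_j$; one then checks that each application of Theorem~\ref{thm:chenli} is legitimate. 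It is also worth remarking explicitly that $v_{j+1} \ge 0$ at each stage (again by the comparison principle, since $v_j \ge 0$), so that the final inequality $(c_g)^{2k} u^\star \le v$ is a genuine two-sided-type control with the right-hand side nonnegative, matching the use made of it in~\S\ref{sec:sobolev} and~\S\ref{sec:rellich}.
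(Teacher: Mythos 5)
Your argument is essentially the paper's: both are finite inductions that iterate Theorem~\ref{thm:chenli}, at each step combining it with the Euclidean comparison (maximum) principle on $\Omega^\star$ to bridge the auxiliary solution $w$ (with data $u_j^\star$) and $v_{j+1}$ (with data $v_j$). The paper phrases the recursion top-down, setting $\widetilde u = -\Delta_g u$, $\widetilde v = -\Delta_{g_0} v$, invoking the $(k-1)$-order case for $(c_g)^{2(k-1)}\widetilde u^\star \le \widetilde v$, and then applying Theorem~\ref{thm:chenli} once more together with the maximum principle; you have simply unrolled this into a bottom-up chain over $j=0,\dots,k$. These are the same proof.

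That said, your exponent bookkeeping is wrong as written. With $u_j \defeq (-\Delta_g)^{k-j}u$ (so $u_0=f$, $u_k=u$) the bound you actually want to propagate is $(c_g)^{2j}u_j^\star \le v_j$, not $(c_g)^{2(k-j)}u_j^\star \le v_j$: at $j=0$ the former gives $u_0^\star = v_0$ trivially (which is exactly what you wrote for your base case, contradicting your own stated form), and at $j=k$ it delivers $(c_g)^{2k}u^\star \le v$. Moreover your claimed identity $(c_g)^{2(k-j)+2} = (c_g)^{2(k-(j+1))}$ is false, since $2(k-(j+1)) = 2(k-j)-2$; with the corrected exponent $(c_g)^{2j}$ the step reads $(c_g)^{2j}\cdot(c_g)^2 = (c_g)^{2(j+1)}$ and the induction closes cleanly. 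Your remark about sign restrictions on $f$ is a fair observation, but the paper also takes the applicability of Theorem~\ref{thm:chenli} for granted at this point and reduces to positive $u$ only later, so it is not a gap relative to the target statement.
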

\begin{proof}
	We proceed by finite induction. When \(k=1\), Theorem~\ref{thm:itp} follows from Theorem~\ref{thm:chenli}. In the sequel, we prove Theorem~\ref{thm:itp} for an arbitrary \(k\ge 2\), assuming that it is true for \(k-1\). Let
	\[\widetilde u\defeq -\Delta_g u\quad\text{and}\quad\widetilde v\defeq-\Delta_{g_0}v.\]
	
	On the one hand, according to problems~\eqref{eq:itp:pde}~\&~\eqref{eq:itp:pde:symm}, one has 
	\[
		\begin{cases}
		(-\Delta_g)^{k-1}\widetilde u=f,&\text{in }\Omega,\\
        \widetilde u=\Delta_g \widetilde u = \dots =\Delta_g^{k-2} \widetilde u=0,&\text{on }\partial\Omega,
	\end{cases}\quad\text{and}\quad 	
	\begin{cases}
		(-\Delta_{g_0})^{k-1}\widetilde v=f^\star,&\text{in }\Omega^\star,\\
         \widetilde  v=\Delta_{g_0} \widetilde v = \dots =\Delta_{g_0}^{k-2} \widetilde v=0,&\text{on }\partial\Omega^\star.
	\end{cases}
	\]
	Hence, our assumption (Theorem~\ref{thm:itp} is true for \( k-1\)) implies that for a.e.~\(x\in \Omega\) one has \[(c_g)^{2(k-1)}\cdot\widetilde{u}^\star(x)\le\widetilde{v}(x).\]

	On the other hand, we also have \({u}=0\) and \({v}=0\) on \(\partial\Omega\) and \(\partial \Omega^\star\), respectively, thus we obtain
	\[
		\begin{cases}
			-\Delta_gu=\widetilde{u}&\text{in }\Omega,\\
        u=0,&\text{on }\partial\Omega,
		\end{cases}\quad\text{and}\quad
		\begin{cases}
			-\Delta_{g_0}v=\widetilde{v}&\text{in }\Omega^\star,\\
			 v=0,&\text{on }\partial\Omega^\star.
		\end{cases}
	\]

	Finally, let \(z\) be a weak solution of   
	\[
		\begin{cases}
			-\Delta_{g_0}z=\widetilde{u}^\star,&\text{in }\Omega^\star,\\
        z=0,&\text{on }\partial\Omega^\star.
		\end{cases}
	\]
	By Theorem~\ref{thm:chenli}, we obtain that for a.e.\ \(x\in\Omega\) one has \[(c_g)^2\cdot u^\star(x)\le z(x),\] and the following inequality holds in the weak sense: \[-\Delta_{g_0} \left((c_g)^{2(k-1)}\cdot z\right)=(c_g)^{2(k-1)}\cdot\widetilde{u}^\star\le \widetilde v=-\Delta_{g_0} v.\] 
	Thus, the Euclidean maximum principle implies the inequality~\eqref{eq:itp:ineq}.

\end{proof}

  \begin{remark}
	We note that Fontana, Morpurgo, and Qin~\cite{fontana2024sharp} used a version of the above iterated Talenti's principle for Green functions (and Navier boundary conditions) on Riemannian manifolds having Euclidean volume growth.
 	We also  note that the non-iterated version of Talenti's principle is available on Finsler manifolds with non-negative weighted Ricci curvature (see Mester~\cite{mestertalenti}) and on the so-called \(\mathsf{RCD}(0,N)\) spaces (see Wu~\cite{wu2025almost}).  For simplicity, our results are presented on Riemannian manifolds, but could be extended to the aforementioned settings. 
 \end{remark}

\section{Sobolev inequalities}\label{sec:sobolev}
The goal of this section is twofold. First, we prove Theorem~\ref{thm:sobolev}, that is, we establish higher-order Sobolev inequalities on Riemannian manifolds supporting inequality~\eqref{eq:intro:isoperimetric}. Next, we present a sharpness result concerning a particular Sobolev inequality, see Theorem~\ref{thm:sobolev:sharpness}.
\begin{proof}[Proof of Theorem~\ref{thm:sobolev}] Since \(u\in C_0^\infty(M)\), there exists a regular domain \(\Omega \in M\), such that \(u\) (and its derivatives) vanish outside of \(\Omega\). Thus, it is enough to prove the Sobolev inequality~\eqref{eq:sobolev} on an \emph{arbitrary} regular  domain \(\Omega\subset M\). We distinguish two cases.

	\textbf{Case 1:} \(m=2k\). Let \(f\colon \Omega\to \mathbb{R}\) defined by 
	%
	\begin{equation}\label{eq:sobolev:fdef}
	f=(-\Delta_g^k)u.
	\end{equation}
	If follows that, \(u\) is a solution of problem~\eqref{eq:itp:pde}, and let \(v\) be the solution of the symmetrized problem~\eqref{eq:itp:pde:symm}. By the iterated Talenti's principle (see Theorem~\ref{thm:itp}) one has for almost every \(x\in\Omega\) that
	\begin{equation}\label{eq:sobolev:itp:ineq}
		(c_g)^{2k}\cdot u^\star(x)\le v(x), 
	\end{equation}

	First, relations~\eqref{eq:itp:pde}~\&~\eqref{eq:itp:pde:symm} and the Cavalieri principle (see relation~\eqref{eq:prelim:Cavalieri}) implies
	\begin{align*}
		\int_\Omega|D_g^mu|^p\diff v_g=\int_\Omega|\Delta_g^ku|^p\diff v_g=\int_\Omega |f|^p\diff v_g=\int_{\Omega^\star} (f^\star)^p\diff v_{g_0}=\int_{\Omega^\star}|\Delta_{g_0}^kv|^p\diff v_{g_0}.
	\end{align*}

	Next, the Euclidean Sobolev inequality and relation~\eqref{eq:sobolev:itp:ineq} yields
	\[
		\int_{\Omega^\star}|\Delta_{g_0}^kv|^p\diff v_{g_0}\ge 
		S_{m,p}\cdot\left(\int_{\Omega^\star}|v|^{p^*}\diff v_{g_0}\right)^\frac{p}{p^*}\ge 
		(c_g)^{2kp}\cdot S_{m,p}\cdot\left(\int_{\Omega^\star}(u^\star)^{p^*}\diff v_{g_0}\right)^\frac{p}{p^*}.
	\]

	Finally, using the relation \(2k=m\) and the Cavalieri principle leads us to 
	\[
		(c_g)^{2kp}\cdot S_{m,p}\cdot\left(\int_{\Omega^\star}(u^\star)^{p^*}\diff v_{g_0}\right)^\frac{p}{p^*}=(c_g)^{mp}\cdot S_{m,p}\cdot\left(\int_{\Omega}|u|^{p^*}\diff v_{g}\right)^\frac{p}{p^*},
	\]
	which is precisely inequality~\eqref{eq:sobolev}.
	
	\textbf{Case 2:} \(m=2k+1\). Define \(f\) as in relation~\eqref{eq:sobolev:fdef}, then \(u\) is a solution of the problem~\eqref{eq:itp:pde} and let \(v\) be a solution of the symmetrized problem~\eqref{eq:itp:pde:symm}. Note that, here we use the same problem as in the previous case (characterized by the parameter \(k\)).

	On the one hand, using relations~\eqref{eq:itp:pde}~\&~\eqref{eq:itp:pde:symm} and the Pólya--Szegő inequality (see relation~\eqref{eq:prelim:PolyaSzego}) we obtain
	\begin{align*}
		\int_\Omega|D_g^mu|^p\diff v_g&=\int_\Omega|\nabla_g(\Delta_g^ku)|^p\diff v_g=\int_\Omega|\nabla_gf|^p\diff v_g\\&\ge (c_g)^{p}\cdot \int_{\Omega^\star} |\nabla_{g_0}f^\star|^p\diff v_{g_0}= (c_g)^{p}\cdot \int_{\Omega^\star}|\nabla_{g_0}(\Delta_{g_0}^kv)|^p\diff v_{g_0}.
	\end{align*}

	On the other hand, the Euclidean Sobolev inequality, relation~\eqref{eq:sobolev:itp:ineq}, and the Cavalieri principle yields
	\begin{align*}
		(c_g)^{p}\cdot \int_{\Omega^\star}|\nabla_{g_0}(\Delta_{g_0}^kv)|^p\diff v_{g_0}&
		\ge(c_g)^{p}\cdot S_{m,p}\cdot \left(\int_{\Omega^\star}|v|^{p^*}\diff v_{g_0}\right)^\frac{p}{p^*}\\
		&\ge (c_g)^{(2k+1)p}\cdot S_{m,p}\cdot\left(\int_{\Omega^\star}(u^\star)^{p^*}\diff v_{g_0}\right)^\frac{p}{p^*}\\
		&=(c_g)^{mp}\cdot S_{m,p}\cdot\left(\int_{\Omega}|u|^{p^*}\diff v_{g}\right)^\frac{p}{p^*},
	\end{align*}
	which ends the proof.
\end{proof}

We conclude this section with a sharpness result concerning the Sobolev inequality~\eqref{eq:sobolev}  on manifolds with 
\(\mathbf{Ric}_g\ge0\) and Euclidean volume growth, in the particular case when \(m = 2\) and \(p = 2\). This finding extends a result of  Barbosa and Krist\'aly \cite{barbosa2018}: If a second-order Sobolev inequality with the optimal Euclidean constant holds on a Riemannian manifold 
\((M,g)\) with non-negative Ricci curvature, then  \((M,g)\) is isometric to the Euclidean space \((\mathbb{R}^n, g_0)\).

\begin{theorem}\label{thm:sobolev:sharpness}
	Let \((M,g)\) be a complete, non-compact \(n\)-dimensional Riemannian manifold, with \(n> 4\), 
	\(\mathbf{Ric}_g\ge0\), and Euclidean volume growth. Let \(\rho(\cdot)=d_g(x_0,\cdot)\) for some \(x_0\in M\) and suppose that \(\rho\Delta_g\rho\ge n-5\). Then the inequality
	\begin{equation}\label{eq:sobolev:sharp}
		\int_M|\Delta_gu|^2\diff v_g\ge \mathsf{AVR}_g^{\frac{4}{n}}\cdot S_{2,2}\cdot \left(\int_{M} |u|^{\frac{2n}{n-4}}\diff v_g\right)^{\frac{n-4}{n}},\quad\forall u\in C_0^\infty(M),
	\end{equation}
	is \emph{sharp} in the sense that the constant \(\mathsf{AVR}_g^{\frac{4}{n}}\cdot S_{2,2}\) cannot be improved.
\end{theorem}

\begin{proof} The first part of the proof is given in Barbosa and Kristály~\cite[Theorem 1.1/(ii)]{barbosa2018}, we reproduce it for the reader's convenience, the second part can be done in the spirit of Kristály~\cite[Proposition 2.3 \& Theorem 1.1]{KristalyCalcVar24}.

	Suppose that there exists a constant \(C>\mathsf{AVR}_g^{\frac{4}{n}}\cdot S_{2,2}\) such that for every \(u\in C_0^\infty(M)\) one has
	\begin{equation}\label{eq:sobolev:C}
		\int_{M} (\Delta_g u)^2 \diff v_g\ge C\cdot\left(\int_{M} |u|^{\frac{2n}{n-4}}\diff v_g\right)^{\frac{n-4}{n}}.	
	\end{equation}
	To derive a contradiction, we use the \emph{Talentian bubbles} defined by
	\[
		u_\lambda\defeq t_\lambda(\rho),\quad \text{where}\quad t_\lambda(r)\defeq(\lambda+r^2)^{\frac{4-n}{2}}, \quad \lambda,r>0,
	\]
	as test functions in inequality~\eqref{eq:sobolev:C}; this is allowed, since they can be be approximated by \(C_0^\infty(M)\) functions.

	On the one hand, using the chain rule and the eikonal equation (\(|\nabla_g\rho|=1\) a.e.~in \(M\)), for almost every \(x\in M\) we obtain 
	\begin{equation}\label{eq:sobolev:laplace}
		(\Delta_gu_{\lambda})^2=(n-4)^2(\lambda+\rho^2)^{-n}\left[\lambda+(3-n)\rho^2+(\lambda+\rho^2)\rho\Delta_g\rho\right]^2.
	\end{equation}
	
	On the other hand, by assumption and the Laplacian comparison theorem applied for the case \(\mathbf{Ric}_g\ge0\) (see e.g.~Gallot, Hulin, and Lafontaine~\cite[Theorem 3.101]{gallot2004riemmanian}) we also have 
	\[
		n-1\ge \rho\Delta_g\rho\ge n-5,
	\]
	which yields
	\begin{equation}\label{eq:sovolev:sharp:pointwise}
		\left[\lambda+(3-n)\rho^2+(\lambda+\rho^2)\rho\Delta_g\rho\right]^2\leq (n\lambda+2\rho^2)^2. 
	\end{equation}

	According to the relations~\eqref{eq:sobolev:C},~\eqref{eq:sobolev:laplace},~and~\eqref{eq:sovolev:sharp:pointwise},  for every \(\lambda>0\) we obtain
	\begin{align*}
		\nonumber(n-4)^2\int_M(\lambda+\rho^2)^{-n}(n\lambda+2\rho^2)^2\diff v_g
		\ge
		C\cdot\left(\int_{M} (\lambda+\rho^2)^{-n}\diff v_g\right)^{\frac{n-4}{n}}.
	\end{align*}
	Multiplying by \(\lambda^\frac{n-4}{2}\) and using the notation 
	\[
		\mathcal{H}(\lambda,s)\defeq\int_{M} (\lambda+d_g^2)^{-s}\diff v_g,\quad \lambda,s>0,
	\]
	leads us to 
	\begin{equation}\label{eq:sobolev:after}
		(n-4)^2\left(4\lambda^{\frac{n}{2}-2}\mathcal{H}(\lambda,n-2)+4(n-2)\lambda^{\frac{n}{2}-1}\mathcal{H}(\lambda,n-1)+(n-2)^2\lambda^\frac{n}{2}\mathcal{H}(\lambda,n)\right)\ge C\cdot \left(\lambda^\frac{n}{2}\mathcal{H}(\lambda,n)\right)^\frac{n-4}{n}.
	\end{equation}
	
	As a final step we take the limit as \(\lambda \to \infty\) in inequality~\eqref{eq:sobolev:after}. To do this, we use the following asymptotic property of \(\mathcal{H}\) for {\(s>n/2\)}:
	\[
		\lim_{\lambda\to \infty}\lambda^{s-\frac{n}{2}}\mathcal{H}(\lambda,s)=\omega_n\cdot \mathsf{AVR}_g\cdot  \frac{\Gamma\left(\frac{n}{2}+1\right)\Gamma\left(s-\frac{n}{2}\right)}{\Gamma(s)},
	\]
	(see e.g. Krist\'aly \cite[Proposition 2.3/(ii)]{KristalyCalcVar24}) and the following  property of the gamma function: \[\Gamma(z+1)=z\Gamma(z),\quad \forall z>0.\]

	After taking the limit and simplifying the left hand side, we obtain 
	\begin{align*}
		(n-4)(n-2)n(n+2)\frac{\Gamma\!\left(\frac n2\right)}{\Gamma(n)}
		&\ge
		\frac{C}{\pi^2}\mathsf{AVR}_g^{-\frac{4}{n}}\left(\frac{\Gamma\left(\frac{n}{2}\right)}{\Gamma (n)}\right)^\frac{n-4}{n}.
	\end{align*}
	Observe that the letter inequality is equivalent to 
	\[
		S_{2,2}\cdot \mathsf{AVR}_g^{\frac{4}{n}}\ge C,
	\]
	which is a contradiction.  This completes the proof.
\end{proof}

\begin{remark}\label{rem:sharp}
	One may wonder whether similar (possibly higher order) assumptions as \(\rho\Delta_g\rho\ge n-5\) yield the sharpness of higher-order Sobolev inequalities. An important ingredient of the previous proof is to guarantee inequality~\eqref{eq:sovolev:sharp:pointwise}, which can be rewritten as
	\[
		(\Delta_gu_\lambda )^2\le L(\rho)^2,\quad\text{where}\quad L(|x|)=\Delta_{g_0}t_\lambda(|x|).
	\]
	It turns out that the higher-order version of the above pointwise inequality implies rigidity (the manifold needs to be Euclidean). Thus, a possible proof of the sharpness (if exists) may rely on non-pointwise estimates.
\end{remark}

\section{Rellich inequalities and improvements}\label{sec:rellich}
In this section we prove Theorem~\ref{thm:rellich}, that is we establish higher-order Rellich inequalities on Riemannian manifolds supporting inequality~\eqref{eq:intro:isoperimetric}. After that, we present various improved Rellich inequalities, which can be proved in a similar manner.

\begin{proof}[Proof of Theorem~\ref{thm:rellich}] 
	The proof follows a similar line of thoughts as the proof of Theorem~\ref{thm:sobolev}: We observe that it is enough to prove it for regular domains \(\Omega\). In addition, we define \(f=(-\Delta_g)^ku\) and consider the problem~\eqref{eq:itp:pde}, and its symmetrization~\eqref{eq:itp:pde:symm}, whose solution is denoted by \(v\).

	On the one hand, if \(m=2k\), by applying the Cavalieri principle (see~\eqref{eq:prelim:Cavalieri}), the iterated Talenti's principle (see Theorem~\ref{thm:itp}), the Euclidean Rellich inequality, and the Hardy--Littlewood inequality (see~\eqref{eq:prelim:HardyLittlewood}) we get
	\begin{align*}
		\int_\Omega|D_g^mu|^p\diff v_g&=\int_\Omega|\Delta_g^ku|^p\diff v_g=\int_\Omega |f|^p\diff v_g=\int_{\Omega^\star} (f^\star)^p\diff v_{g_0}=\int_{\Omega^\star}|\Delta_{g_0}^kv|^p\diff v_{g_0}\\
		&\ge R_{m,p}\cdot\int_{\Omega^\star}\frac{|v|^p}{|x|^{mp}}\diff v_{g_0} \ge (c_g)^{2kp}\cdot R_{m,p}\cdot\int_{\Omega^\star}\frac{(u^\star)^p}{|x|^{mp}} \diff v_{g_0}\\
		&\ge (c_g)^{mp}\cdot R_{m,p}\cdot\int_{\Omega}\frac{|u|^p}{\rho^{mp}} \diff v_{g}.
	\end{align*}

	On the other hand, if \(m=2k+1\), we also use the Pólya--Szegő inequality (see~\eqref{eq:prelim:PolyaSzego}) to obtain
	\begin{align*}
		\int_\Omega|D_g^mu|^p\diff v_g&=\int_\Omega|\nabla_g\Delta_g^ku|^p\diff v_g=\int_\Omega |\nabla_gf|^p\diff v_g\ge (c_g)^p\cdot \int_{\Omega^\star} (\nabla_{g_0}f^\star)^p\diff v_{g_0}\\
		&=(c_g)^p\cdot\int_{\Omega^\star}|\nabla_{g_0}\Delta_{g_0}^kv|^p\diff v_{g_0}\ge (c_g)^p\cdot R_{m,p}\cdot\int_{\Omega^\star}\frac{|v|^p}{|x|^{mp}}\diff v_{g_0} \\
		&\ge (c_g)^{(2k+1)p}\cdot R_{m,p}\cdot\int_{\Omega^\star}\frac{(u^\star)^p}{|x|^{mp}} \diff v_{g_0}\ge (c_g)^{mp}\cdot R_{m,p}\cdot\int_{\Omega}\frac{|u|^p}{\rho^{mp}} \diff v_{g},
	\end{align*}
	which concludes the proof.
\end{proof}
\begin{proof}[Proof of Theorem~\ref{thm:general}] Observe that the proof of Theorem~\ref{thm:rellich} provides a `recipe' for threating improved Rellich inequalities of the form \[
	\int_{\Omega^\star}|D_{g_0}^mu|^p\diff v_{g_0}\ge C_{m,p}\cdot \int_{\Omega^\star} |u|^p h(|x|)\diff v_{g_0},\quad u\in C_0^\infty(\Omega^\star).
\]
The only difference is that we need to use the Hardy--Littlewood inequality (see~\eqref{eq:prelim:HardyLittlewood}) for a generic \(h\) instead of \(t\mapsto \frac{1}{t^{mp}}\).
\end{proof}

In the rest of the paper, we list various applications of Theorem~\ref{thm:general}. For brevity, in each case, first we indicate a reference for the Euclidean version, next we state the Riemannian extension of the corresponding inequality.
\begin{itemize}[wide]
	\item  Adimurthi, Grossi, and Santrac~\cite[Theorem 2.1/(a)]{adimurthi2006optimal}: For any function \(h\) let 
	\(h_{[0]}(t)=t\), \(h_{[1]}(t)=h(t)\) and \( h_{[s]}(t)=h(h_{[s-1]}(t))\), for all \(s\ge 2\).
	Define \(\ell(t)=\frac{1}{1-\log(t)}\), and let \(T\ge 1\) be an integer and \(R>0\). Then for every  \(u\in C_0^\infty(\Omega)\) one has 
	\[
		\int_\Omega |\Delta_g u|^2\diff v_g\ge (c_g)^4\cdot\left(\frac{n^2(n-4)^2}{16}\int_\Omega \frac{u^2}{\rho^4}\diff v_g + \left(1+\frac{n(n-4)}{8}\right)\sum_{j=1}^T\int_\Omega \frac{u^2}{\rho^4}\prod_{s=1}^j\ell_{[s]}^2\left(\frac{\rho}{R}\right)\diff v_g\right).
    \]
	\item Ghoussoub and Moradifam~\cite[Corollary 3.8/1]{ghoussoub2011bessel}: Let \(n\ge 5\), \(\Omega\subseteq M\) be regular, \(k\ge 1\) be an integer, \(R\) denote the radius of \(\Omega^\star\) and define \(r=R\cdot \exp_{[k-1]}(e)\). Then for every \(u\in C_0^\infty(\Omega)\) one has 
	\[
		\int_\Omega |\Delta_g u|^2\diff v_g\ge (c_g)^4\cdot\left(\frac{n^2(n-4)^2}{16}\int_\Omega \frac{u^2}{\rho^4}\diff v_g + \left(1+\frac{n(n-4)}{8}\right)\sum_{j=1}^k\int_\Omega \frac{u^2}{\rho^4}\left(\prod_{i=1}^j\log_{[i]}\left(\frac{r}{\rho}\right)\right)^{-2}\diff v_g\right).
	\]
	\item Gazzola, Grunau, and Mitidieri~\cite[Theorem 2]{gazzola2004hardy}: Let \(p\ge 2\), \(n\ge 2p\), and \(\Omega\subset M\) be regular, then for every \(u\in C_0^\infty(\Omega)\) one has
	\begin{align*}
		\int_\Omega|\Delta_g u|^p\diff v_g&\ge (c_g)^{2p}\cdot \left(\frac{(n-2p)(p-1)n}{p^2}\right)^p\cdot\int_\Omega\frac{|u|^p}{\rho^{2p}}\diff v_g\\ 
		&\qquad +(c_g)^{2p}\cdot\frac{4(p-1)^p(n-2p)^{p-1}n^{p-1}\gamma }{p^{2p-1}}\cdot \left(\frac{\omega_n}{\operatorname{vol}_g(\Omega)}\right)^\frac{2}{n}\cdot \int_\Omega\frac{|u|^p}{\rho^{2p-2}}\diff v_g\\
		&\qquad+(c_g)^{2p}\cdot\Gamma\cdot \left(\frac{\omega_n}{\operatorname{vol}_g(\Omega)}\right)^\frac{2p}{n} \cdot \int_\Omega|u|^p\diff v_g.
	\end{align*}
	The constants appearing in the above inequality are defined as follows. Let \(\vartheta=4+\frac{n(p-2)}{p}\), introduce the function space \(X=\{v\in C^2([0,1]):v'(0)=v(1)=0, v\not\equiv 0\}\), and denote \(\Delta_\vartheta v(r)=v''(r)+\frac{\vartheta-1}{r}v'(r)\). Define 
	\[
		\gamma\defeq\displaystyle\inf_X\frac{\int_0^1 r|v'(r)|^2\diff r}{\int_0^1 r v(r)^2\diff r}\quad\text{and}\quad\Gamma\defeq\displaystyle\max\left\{\frac{(p-1)^{p-1}(n-2p)^{p-2}n^{p-2}}{p^{2(p-2)}}\Lambda_\vartheta,\lambda_\vartheta\right\},
	\]
	where \(\lambda_\vartheta\defeq\displaystyle \inf_X\frac{\int_0^1 r^{2p-1}|\Delta_\vartheta v(r)|^p\diff r}{\int_0^1 r^{2p-1}|v(r)|^p\diff r}\) and
	\(\Lambda_\vartheta\defeq\displaystyle\inf_X\frac{\int_0^1 r^3 |v(r)|^{p-2}(\Delta_\vartheta v(r))^2\diff r}{\int_0^1r^{2p-1}|v(r)|^p\diff r}\).

	In particular, when \(p=2\), we obtain the following second order Brezis--Vazquez-type inequality: 
	\begin{align*}
		\int_\Omega(\Delta_gu)^2\diff v_g&\ge (c_g)^{4}\cdot \frac{n^2(n-4)^2}{16}\cdot \int_\Omega \frac{u^2}{\rho^4}\diff v_g+ (c_g)^{4}\cdot\frac{n(n-4)}{2}\cdot \Lambda(2)\cdot\left(\frac{\omega_n}{\operatorname{vol}_g(\Omega)}\right)^\frac{2}{n}\cdot\int_B\frac{u^2}{\rho^2}\diff v_g\\
		&\qquad+(c_g)^{4}\cdot {\Lambda(4)^2}\cdot\left(\frac{\omega_n}{\operatorname{vol}_g(\Omega)}\right)^\frac{4}{n}\cdot \int_\Omega u^2\diff v_g,\quad\forall u\in C_0^\infty(\Omega),
	\end{align*}
	where \(\Lambda(n)=\displaystyle\inf_X\frac{\int_0^1r^{n-1}|v'(r)|^2\diff r}{\int_0^1r^{n-1}v(r)^2\diff r}\).
	\item Gazzola, Grunau, and Mitidieri~\cite[Corollary 2]{gazzola2004hardy}: Let \(\Omega\subset M\) be regular and \(m\ge 1\) be an integer with \(n>2m\). Then there exist some constants \(c_1,c_2, \dots, c_m\) depending only on \(m\), \(n\), and \(\operatorname{vol}_g(\Omega)\) such that for every \(u\in C_0^\infty(\Omega)\) one has
	\[\int_\Omega |D^mu|^2\diff v_g\ge (c_g)^{2m}\cdot \left(L_m\cdot \int_\Omega\frac{u^2}{\rho^{2m}}\diff v_g+\sum_{s=1}^m c_s\int_\Omega\frac{u^2}{\rho^{2(m-s)}}\diff v_g\right),\]
	where \(L_m=\displaystyle\frac{1}{4^m}\prod_{s=1}^m(n+2m-4j)^2\).
\end{itemize}

\medskip
\noindent\textbf{Acknowledgement.} The authors would like to thank to the anonymous reviewers for carefully reading the article and providing valuable comments.

\end{document}